 \newtheorem{thm}{Theorem}[section]
 \newtheorem{cor}[thm]{Corollary}
 \newtheorem{rem}[thm]{Remark}
 \numberwithin{equation}{section}
\newtheorem{lem*}{Lemma}
\newtheorem{cor*}{Corollary}
\def\p#1{\partial #1}
\def\la{\lambda}
\def\Th{\Theta}
\def\R{\Bbb{R}}
\def\C{\Bbb{C}}
\begin{document}
\title{
Verifying spiral minimal product structure through the Takahashi Theorem}
\author{Yongsheng Zhang$^*$}
\address{Academy for Multidisciplinary Studies, Capital Normal University, Beijing 100048, P. R. China}
\email{yongsheng.chang@gmail.com}
\date{\today}
%\date{\today}

%\keywords{$\mathscr C$-totally real minimal submanifold, 
%special Legendrian submanifold, totally real submanifold, minimal Lagrangian submanifold,
%horizontal lifting, special Lagrangian calibration} 
%%\subjclass[2010]{~53C38,~49Q15, ~28A75}
\begin{abstract}
This note expands a remark of \cite{LZ}.
Namely, we will verify the spiral minimal product structure through the Takahashi Theorem with full computational details which were omitted in \cite{LZ}.
\end{abstract}
\maketitle
%\titlecontents{section}[0em]{}{\hspace{.5em}}{}{\titlerule*[1pc]{.}\contentspage}
%\titlecontents{subsection}[1.5em]{}{\hspace{.5em}}{}{\titlerule*[1pc]{.}\contentspage}
%{\setcounter{tocdepth}{2} \small \tableofcontents}
\section{Introduction}\label{P} 
                  Minimal varieties in spheres are one-to-one corresponding to minimal cones in Euclidean spaces.
                  As prototypes of minimal objectives in infinitesimal scale, the study of minimal cones is of particular importance.
                  
                  Recently, the author was interested in exploring structures for minimal submanifolds in spheres and related topics.
                  In \cite{TZ}
                  a proof of constant minimal product of minimal submanifolds is given with full information about second fundamental form
                  and a successful combination of Lawlor's curvature criterion and the theory of isoparametric foliations of spheres
                  shows that minimal products among minimal isoparametric hypersufaces and focal submanifolds
                  can span area-minimizing cones whenever the products have dimension no less than $36$.
                  
                 Given minimal submanifolds $\{L_i^{k_i}\}$ in $\mathbb S^{N_i}$ respectively for $i=1,\cdots, n$,
                               the constant minimal product of them (up to $\pm$ signs)
                               in the following
                         \begin{equation}\label{prod}
         L_1\times\cdots
         \times L_n
         \triangleq
          \left(\lambda_1 L_1, \cdots, \lambda_n  L_n\right)\subset \mathbb S^{N_1+\cdots +N_n+n-1}
       \end{equation}
         where $\lambda_i=\sqrt{\frac{k_i}{k}}$  and $k=\sum_{i=1}^n k_i>0$
         provides   a  nice algorithm to generate new minimal submanifolds  in spheres
         due to the property that the structure takes inputs regardless of their concrete figures at all.
         This result should be classical and the author found a very quick proof in \cite{X} (without bothering full information about second fundamental form) through the Takahashi Theorem around the end of 2020.

                  Since 2021 the author began to think about more structures beyond \eqref{prod}
                  and inspired by \cite{CLU} finally we discovered a new algorithm to produce minimal submanifolds in spheres in \cite{LZ}.
                  Instead of using constant multipliers  in \eqref{prod},
                  we employ components of suitable immersed submanifolds in certain sphere.
                  For simplicity, let us focus on the situation with merely two inputs in the algorithm, namely,
                  \begin{equation}\label{prod2}
                   \left(\gamma_1 L_1,  \gamma_2  L_2\right)\subset \mathbb S^{N_1+N_2+1}
                  \end{equation}
                  where $\gamma=(\gamma_1(t), \gamma_2(t))$ is an immersed curve in $\mathbb S^3\subset \C\oplus \C$.
                  
                  For technical  reason, we need the concept of $C$-totally real.
                 If $Jx\perp T_xL$ for every point $x\in L\subset \mathbb S^{2n+1}$
                 where $J$ is the standard complex structure of ambient complex Euclidean space $\mathbb C^{n+1}$,
                 then $L$
                  is said to be $C$-totally real.

                  {\ }

                  \section{Spiral minimal product}

                  When both inputs are $C$-totally real minimal submanifolds, we want to derive spiral minimal products of them by some $\gamma$ in \eqref{prod2}.
                  Let $\gamma_1=a e^{is_1}$ and $\gamma_2=b e^{is_2}$
                  where $a, b, s_1, s_2$ are $C^2$ functions in $t$.
                  With preferred parameter $s$ such that  $a(s)=\cos s$ and $b(s)=\sin s$,
                  local piece of $\gamma$ for our purpose can be solved by
                  \begin{equation}\label{e1}
              b^2\dot s_2
                    =C_1 {a^2\dot s_1},
\end{equation}
\begin{equation}\label{dotG}
        %
         %\left(
         -k_1\frac{b}{a}+k_2 \frac{a}{b}
                  -
          \frac{ab\left[
         (\dot s_1)^2-(\dot s_2)^2
         \right]}{(a\dot s_1)^2+(b\dot s_2)^2}
         %\right)
=-
                \frac{\dot \Theta}{2\Theta  \left(
         1+\Theta
         \right)}
             %
    %                                          \Big(2a's'_1+as''_1\Big)as'_1+\Big(2b's'_2+bs''_2\Big)bs'_2
              %                           \bigg \}
\end{equation}
          where  $C_1\in \R$ is a parameter and $\Theta=(a\dot s_1)^2+(b \dot s_2)^2$.
                    Plug relation \eqref{e1} into \eqref{dotG} for
\begin{equation}\label{dotG2}
        %
         %\left(
         -k_1\frac{b}{a}+k_2 \frac{a}{b}
                  -
         \frac{b^4-C_1^2 a^4}{ab(b^2+C_1^2a^2)}
         %\right)
=-
                \frac{\dot \Theta}{2\Theta  \left(
         1+\Theta
         \right)}.
             %
    %                                          \Big(2a's'_1+as''_1\Big)as'_1+\Big(2b's'_2+bs''_2\Big)bs'_2
              %                           \bigg \}
\end{equation}
                  Hence,  we can solve
                  \begin{equation}\label{Gexp}
            \displaystyle 
            \Theta
            =\dfrac
            { 
            %\left|
                    %\sqrt
                   {
                      1+\left(C_1^2-1\right)\cos^2 s
                   }
                                        %\right|
              }
            { C_2  
                 \left(\cos s\right)^{2k_1+2}
                  \left( \sin s\right)^{2k_2+2}
                  \,
              %      \big|
              %            \cos s \sin s
              %             \big|
                   -
                                    %  \sin^2 s
                 %  \sqrt
                   {
                 %  \big|
                      1-\left(C_1^2-1\right)\cos^2 s
                  %    \Big|
                   }
            }
            %+\Theta_0
         %   \, .
\end{equation}
       where $C_2$ is another parameter to make the denominator of \eqref{Gexp} positive on some nonempty set  $J\subset (0,\frac{\pi}{2})$.
       It turns out that the largest $J$ is always a connected interval
       and
     over $J$ the local solution curve  is given by
  \begin{equation}\label{dotsss}
                 \begin{pmatrix}
       \dot s_1\\
      % \\
     \dot s_2
       \end{pmatrix}
       = 
                %%%%
                               \pm               
                             \sqrt{\frac{1}{ 
                                   {C_2  
                 \left(\cos s\right)^{2k_1+2}
                  \left( \sin s\right)^{2k_2+2}
                  \,
              %      \big|
              %            \cos s \sin s
              %             \big|
                   -
                                    %  \sin^2 s
                 %  \sqrt
                   {
                 %  \big|
                      1-\left(C_1^2-1\right)\cos^2 s
                  %    \Big|
                   }
            }
            }
 }
           \,     %%%%
       \begin{pmatrix}
       \tan s\\
     %  \\
     C_1\cot s
       \end{pmatrix}.
                \end{equation}
  It should be pointed out here
               that
               when $C_2\downarrow 
                          \min
                          \left\{
                          \frac{1+\left(C_1^2-1\right)\cos^2 s}{\left(\cos s\right)^{2k_1+2}
                  \left( \sin s\right)^{2k_2+2}}
                  \Big|
                  s\in\left(0,\frac{\pi}{2}\right)
                  \right\}
                  $,
                  the spiral minimal products of varying magnitudes ($a, b$) limit to a spiral minimal product of steady magnitudes.

                        For fixed allowed $C_1, C_2$ in the local solution curve,
                                 one can assemble $+$ and $-$ parts of \eqref{dotsss} alternately 
                                 for a globally defined solution curve $\gamma$ in a $C^1$ manner.
                             Further    based on Harvey-Lawson's extension result for minimal submanifolds
               with $C^1$ joints
               we know that the global solution curve $\gamma$ is analytic.

               In such a way we derive the algorithm of spiral minimal product.
               It can be observed  (see \cite{LZ}) that the resulting product is again $C$-totally real if and only if $C_1=-1$.
               So, 
                       one can apply the construction multiple times
                        \begin{equation}\label{mprod}
\Big ( \big(( L_1\times_{\beta_1} L_2) \times_{\beta_2} L_3\big)\cdots
                 \Big)
 \times_{\beta_{n-1}} L_n
     %  \subset \mathbb S^{N_1+\cdots +N_n+n-1}
       \end{equation}
               where each $\beta_i$ is a solution curve with $C_1=-1$.
               Certain global properties of solution curves with $C_1=-1$ have been carefully studied in \cite{LZ} with the essential help from the theory of calibrations,
               some of which cannot be rigorously done by pure analysis from the author's viewpoint. 
               It should also be emphasized that the spiral minimal product structure with $C_1=-1$ is very special
               since it can be repeatedly applied to special Legendrian inputs to manufacture new special Legendrian varieties.
               The cones spanned over special Legendrian varieties are special Lagrangian cones 
               which play important roles in many different branches of geometry.
               If we combine this structure with Hopf fibration and its reverse horizontal lifting,
               a generalized Delaunay construction for the category of minimal Lagrangians in complex projective spaces can be established.
               
               Since 
               every minimal submanifold in sphere  becomes $C$-totally real minimal in some higher dimensional sphere,
               for example through the  complexification  of ambient Euclidean space,
              the algorithm \eqref{mprod} together with the complexification procedure can be applied repeatedly for any given finitely many minimal submanifolds in spheres without the restriction $C_1=-1$ in each step.
               
               In summary, the spiral minimal product structure provides a new algorithm of creating new minimal submanifolds in spheres
               and it can fruitfully induce a plethora of new examples of special Lagrangian cones.

                                 {\ }
                                 
                                 \section{Verifying spiral minimal product through the Takahashi Theorem}
                              Comparing with the historical proof of the constant minimal product structure mentioned in \S 1,
                                 around the end of \cite{LZ}
                                 we gave a remark  on how to understand the spiral minimal product structure by different angle.
         Due to the consideration on the length of that paper, we omitted computational details  in  \cite{LZ}
         and instead mentioned  ``one can check through some basic but delicate computations" by the Takahashi Theorem
         and listed several key ingredients therein.
         
                  In this section, full details about the delicate computations will be given. %for interested readers.
                  Let {$\Delta_{g}$} be the standard Laplace-Beltrami operator of Riemannian $(M^k, g)$ for $C^\infty(M)$.
       Then it automatically defines an operator for $C^2$ vector-value map $f:M\longrightarrow \R^{n+1}$.
              If we use local coordinates $(x_1,\cdots, x_k)$,
                          then one can derive the well-known formula 
                              \begin{equation}\label{delta}
                          \Delta_gf=\frac{1}{\sqrt g} \frac{\p}{\p x_i}\left(\sqrt g g^{ij} \frac{\p f}{\p x_j}\right)
                             \end{equation}
       where $g=\det(g_{ij})$ and $(g^{ij})$ is the inverse of the matrix $(g_{ij})$.
       
        The  celebrated Takahashi Theorem captures a very beautiful picture of $f$ deciding a minimal immersion into some Euclidean sphere in terms of this Laplace operator.
       
       \begin{thm}[\cite{T}]\label{T}
       Let $(M, g)$ be a $k$-dimensional Riemannian manifold
       and $f:M\longrightarrow \R^{n+1}$ an isometric immersion.
      If there exists $\la\neq 0$
       such that
       $
       {\boldmath {\Delta_{g}}}f=-\la f,
       $
       then it follows that
       (1) $\la>0$;
       (2) $ f(M)\subset S^n(r)$ where $r^2=\frac{k}{\la}$;
       (3) $f:M\longrightarrow S^n(r)$ is a minimal immersion.
       Conversely, 
                 if $f$ defines a minimal  immersion into $\mathbb S^{n}(r)$,
                 then 
                  $
       {\boldmath {\Delta_{g}}}f=-\la f
       $
       and $\la =\frac{k}{r^2}$.
       \end{thm}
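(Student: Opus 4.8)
The cornerstone of the argument is \emph{Beltrami's formula}: for any isometric immersion $f:M\to\R^{n+1}$ one has
\begin{equation}\label{beltrami}
\Delta_g f=\vec H,
\end{equation}
where $\vec H=\sum_i \mathrm{II}(e_i,e_i)$ is the (unnormalized) mean curvature vector of $f(M)$ in $\R^{n+1}$, computed in any local $g$-orthonormal frame $\{e_i\}$. I would first establish \eqref{beltrami} directly from the coordinate expression \eqref{delta}: differentiating the isometry relations $\lan \pd{f}{x_i},\pd{f}{x_j}\ran=g_{ij}$ and contracting against $g^{ij}$, then splitting into tangential and normal parts relative to $M$, isolates precisely the trace of the second fundamental form, while the tangential contribution cancels by the very definition of the Levi-Civita connection of $g$. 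The essential consequence recorded by \eqref{beltrami} is that $\Delta_g f$ is everywhere \emph{normal} to $M$.

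Granting \eqref{beltrami}, the forward direction proceeds as follows. From $\Delta_g f=-\la f$ and the normality of $\vec H$, the position vector $f$ itself must be normal to $M$ at every point (this is where $\la\neq0$ enters). Consequently each component $\lan df(e_i),f\ran$ of $\na|f|^2$ vanishes, since $df(e_i)$ is tangent to $M$ while $f$ is normal; hence $|f|^2$ is constant on the connected manifold $M$. Writing this constant as $r^2$ gives $f(M)\subset S^n(r)$, which is assertion (2) up to identifying $r$. To pin down $r$ I would compute
\begin{equation}\label{lapfsq}
0=\Delta_g|f|^2=2\lan\Delta_g f,f\ran+2|\na f|^2=-2\la\,|f|^2+2k,
\end{equation}
using $|\na f|^2=g^{ij}g_{ij}=k$ for an isometric immersion. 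Thus $\la r^2=k$, which yields $r^2=k/\la$ and, because $k>0$ and $r^2>0$, also $\la>0$; this establishes (1) and completes (2).

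For the minimality statement (3) I would compare the two ambient geometries via the splitting of the second fundamental form along the chain $M\subset S^n(r)\subset\R^{n+1}$. Since $S^n(r)$ is totally umbilic in $\R^{n+1}$ with second fundamental form $-\tfrac{1}{r^2}\lan\cdot,\cdot\ran\,f$, one has $\mathrm{II}_{\R}(X,Y)=\mathrm{II}_{S}(X,Y)-\tfrac{1}{r^2}\lan X,Y\ran f$ for $X,Y$ tangent to $M$; tracing over a $g$-orthonormal frame gives $\vec H_{\R}=\vec H_{S}-\tfrac{k}{r^2}f$. Substituting $\Delta_g f=\vec H_{\R}=-\la f$ together with $\tfrac{k}{r^2}=\la$ forces $\vec H_{S}=0$, so $f$ is minimal into $S^n(r)$. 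The converse is then immediate by reversing this last computation: if $f$ is minimal, i.e. $\vec H_{S}=0$, then $\vec H_{\R}=-\tfrac{k}{r^2}f$, whence $\Delta_g f=-\la f$ with $\la=k/r^2$.

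The main obstacle is the careful derivation of Beltrami's formula \eqref{beltrami} from \eqref{delta}, together with the correct bookkeeping of tangential versus normal components throughout—in particular the sign and normalization of the totally umbilic second fundamental form of $S^n(r)$ used in the splitting above. Once \eqref{beltrami} is secured, each remaining assertion reduces to a one-line computation.
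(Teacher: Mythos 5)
Your proof is correct, and it is essentially the classical argument of Takahashi: Beltrami's formula $\Delta_g f=\vec H$, constancy of $|f|^2$ from normality of the position vector, the identity $\Delta_g|f|^2=2\lan\Delta_gf,f\ran+2k$ to fix $\la r^2=k$, and the umbilic splitting of the mean curvature along $M\subset S^n(r)\subset\R^{n+1}$. The paper itself offers no proof of this statement --- it is quoted verbatim from \cite{T} --- so there is nothing to compare against; your route is the standard one found in \cite{T} and \cite{X}, with the only (harmless) implicit assumption being connectedness of $M$, needed to conclude that $|f|^2$ is globally constant.
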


Note that the spiral minimal product structure in fact works for immersion situation.
So, in the sequel, we will assume that  $f_1:L_1\longrightarrow \mathbb S^{2n_1+1}$ and $f_2:L_2\longrightarrow \mathbb S^{2n_2+1}$ are two given $C$-totally real minimal immersions.  
                                     Then a spiral minimal product $G_\gamma$ of them induces a metric $g_\gamma$ on $\R\times L_1\times L_2$ by
\begin{equation}\label{indmetric}
               g_\gamma
               =
             dt_{arc}^2 
               \oplus 
               a^2(t_{arc})\, g_1\oplus b^2(t_{arc})\, g_2 
              % =a^2(s)\, g_1\oplus b^2(s)\, g_2 \oplus (1+\Theta)ds^2
\end{equation}
          where $t_{arc}$ is an arc parameter of solution curve $\gamma\subset \mathbb S^3$. 

According to  Theorem \ref{T}, we have
    $$\Delta_{g_1}f_1=-k_1f_1
    \text{\ \ \ \  and \ \ \ \ }
                 \Delta_{g_2}f_2=-k_2f_2.
                   $$
And the remark in \cite{LZ} states the following, which we formulate a theorem here. 
\begin{thm}\label{main}
Given two $C$-totally real minimal immersed submanifolds 
$$f_1:L_1\longrightarrow \mathbb S^{2n_1+1} \text{\ \ \ \  and \ \ \ \ } f_2:L_2\longrightarrow \mathbb S^{2n_2+1}.$$
Suppose $G_\gamma$ is a spiral minimal product of them.
Then it follows that
              $$
              {\Delta_{g_\gamma}}
        G_\gamma
=-(k_1+k_2+1)G_\gamma\, .
              $$
\end{thm}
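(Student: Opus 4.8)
The plan is to compute $\Delta_{g_\gamma}G_\gamma$ directly from the divergence formula \eqref{delta} and to show, block by block, that it equals $-(k_1+k_2+1)G_\gamma$; the Takahashi Theorem \ref{T} enters only through the two eigen-relations $\Delta_{g_1}f_1=-k_1f_1$ and $\Delta_{g_2}f_2=-k_2f_2$ recorded above. Writing $G_\gamma=(\gamma_1f_1,\gamma_2f_2)$ with $\gamma_1=ae^{is_1}$ and $\gamma_2=be^{is_2}$, so that $|G_\gamma|^2=a^2+b^2=1$, I would first read off from the warped-product form \eqref{indmetric} that in coordinates $(t_{arc},u^\alpha,v^p)$ the metric is block diagonal, whence $\sqrt{g_\gamma}=a^{k_1}b^{k_2}\sqrt{g_1}\sqrt{g_2}$ and $g_\gamma^{ij}$ has diagonal blocks $1$, $a^{-2}g_1^{\alpha\beta}$, $b^{-2}g_2^{pq}$. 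The vanishing of the off-diagonal entries is exactly where the $C$-totally real hypothesis is used: together with $|f_i|=1$ it forces $\langle f_i,\partial f_i\rangle_{\C}=0$, so the pullback metric of $G_\gamma$ has no cross terms and \eqref{indmetric} is legitimate.

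Because $g_\gamma^{ij}$ is block diagonal and each block of $G_\gamma$ depends only on the coordinates of one factor, the sum in \eqref{delta} decouples. Applied to $\gamma_1f_1$: the $v$-derivatives drop out; the $u$-derivatives, after pulling the $t_{arc}$-dependent scalar $a^{-2}\gamma_1$ through $\partial_\alpha$ and invoking $\Delta_{g_1}f_1=-k_1f_1$, contribute $-k_1a^{-2}\gamma_1f_1$; and the $t_{arc}$-term contributes $(a^{k_1}b^{k_2})^{-1}\frac{d}{dt_{arc}}(a^{k_1}b^{k_2}\,d\gamma_1/dt_{arc})f_1$. Performing the identical reduction on $\gamma_2f_2$, the whole assertion collapses to the pair of scalar complex ordinary differential equations
\[
\frac{1}{a^{k_1}b^{k_2}}\frac{d}{dt_{arc}}\!\left(a^{k_1}b^{k_2}\frac{d\gamma_1}{dt_{arc}}\right)=\left(\frac{k_1}{a^2}-(k_1+k_2+1)\right)\gamma_1
\]
and its twin obtained by the exchange $(\gamma_1,a,k_1)\leftrightarrow(\gamma_2,b,k_2)$.

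The remaining and most delicate step is to check these two ODEs against the structure equations \eqref{e1}, \eqref{dotG}, \eqref{Gexp}, \eqref{dotsss}. The essential bookkeeping point is that the dots in those equations are derivatives in the preferred parameter $s$ (for which $a=\cos s$, $b=\sin s$), whereas the operator above is in arc length; since $(dt_{arc}/ds)^2=(a')^2+(b')^2+\Theta=1+\Theta$, one must substitute $d/dt_{arc}=(1+\Theta)^{-1/2}\,d/ds$ everywhere. I would then split each complex ODE along the factor $e^{is_j}$ into its imaginary (phase) and real (modulus) parts. The phase part of the first ODE is equivalent to the constancy in $s$ of $a^{k_1}b^{k_2}a^2\dot s_1\,(1+\Theta)^{-1/2}$; feeding in \eqref{dotsss} and the identity — read off from \eqref{Gexp} — that $1+\Theta$ equals $C_2(\cos s)^{2k_1+2}(\sin s)^{2k_2+2}$ divided by the denominator of \eqref{Gexp}, this quantity collapses to the constant $C_2^{-1/2}$. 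Hence the phase equations hold and reproduce the first integral \eqref{e1}, and the modulus parts are what must be matched against the defining relation \eqref{dotG}/\eqref{dotG2}.

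I expect the modulus computation to be the main obstacle. It requires differentiating the closed form \eqref{Gexp} for $\Theta$, rewriting the second arc-length derivative of $a$ (resp.\ $b$) in terms of $s$-derivatives — which is precisely where the combination $\dot\Theta/(2\Theta(1+\Theta))$ on the right of \eqref{dotG} is produced — and eliminating $\dot s_1^2,\dot s_2^2$ through \eqref{dotsss} and the relation $\Theta=a^2\dot s_1^2+b^2\dot s_2^2$. After these substitutions the two modulus equations, combined using \eqref{e1}, must reduce identically to \eqref{dotG}, with all terms weighted by $k_1,k_2,C_1,C_2$ cancelling correctly. Arranging these cancellations, rather than any conceptual difficulty, is the substance of the ``basic but delicate computations'' alluded to in \cite{LZ}, and is what this section supplies in full.
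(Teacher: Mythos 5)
Your strategy is the same as the paper's: decompose $\Delta_{g_\gamma}$ over the warped product \eqref{indmetric}, use $\Delta_{g_i}f_i=-k_if_i$ to strip off the factors $f_i$, and reduce the claim to the scalar complex ODE
\begin{equation*}
\frac{\gamma_1''}{\gamma_1}+\Big(k_1\frac{a'}{a}+k_2\frac{b'}{b}\Big)\frac{\gamma_1'}{\gamma_1}
=\frac{k_1}{a^2}-(k_1+k_2+1)
\end{equation*}
(and its twin), split along $e^{is_1}$ into phase and modulus parts. Your phase computation is correct and complete: the imaginary part vanishes because $a^{k_1+2}b^{k_2}s_1'=a^{k_1}b^{k_2}a^2\dot s_1(1+\Theta)^{-1/2}$ collapses to the constant $C_2^{-1/2}$, which is exactly the content of \eqref{tarc}. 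Your remark that the $C$-totally real hypothesis is what kills the cross terms in the pullback metric and legitimizes \eqref{indmetric} is a worthwhile observation that the paper leaves implicit.

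The gap is that the modulus (real-part) computation is never performed. You write that the real parts ``must reduce identically to \eqref{dotG}, with all terms weighted by $k_1,k_2,C_1,C_2$ cancelling correctly'' and that you ``expect'' this to be the main obstacle --- but that expectation is precisely the assertion to be proved, and it is the entire substance of this note (the abstract promises the ``full computational details which were omitted''). Concretely, one must express $a''/a$ via $a'=-b(1+\Theta)^{-1/2}$, which produces the term $\tfrac{b\dot\Theta}{2a(1+\Theta)^2}$; substitute for $\tfrac{\dot\Theta}{2\Theta(1+\Theta)}$ using \eqref{dotG2}; and then verify, using $\tfrac{1}{1+\Theta}=1-\tfrac{1+(C_1^2-1)a^2}{C_2a^{2k_1+2}b^{2k_2+2}}$, that the residual rational terms combine with $-(s_1')^2=-\tfrac{1}{C_2a^{2k_1+4}b^{2k_2}}$ and with $k_1\tfrac{(a')^2}{a^2}+k_2\tfrac{a'b'}{ab}=\tfrac{k_1b^2-k_2a^2}{a^2(1+\Theta)}$ to leave exactly $\tfrac{k_1b^2}{a^2}-k_2-1$, whence the total is $-k_1-k_2-1$. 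None of these cancellations is exhibited in your write-up, so the proof is not complete. A second, smaller omission: the substitution of \eqref{dotG2} presupposes varying magnitudes ($a'b'\not\equiv 0$); the steady-magnitude case $a=\cos s_{C_1}$, $b=\sin s_{C_1}$ must be treated separately, using $C_2a^{2k_1+2}b^{2k_2+2}=1+(C_1^2-1)a^2$ together with \eqref{sCd} to evaluate $-(s_1')^2-\tfrac{k_1}{a^2}$, and your proposal does not address it at all.
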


                                          \begin{proof}
                                          Based on \eqref{indmetric} 
                                          we know 
                                          %from \eqref{delta} 
                                          that 
\begin{eqnarray} 
&& {\Delta_{g_\gamma}}
        G_\gamma
       \\
       & =&
       \left(  \frac{\gamma_1}
         {a^2}
  %       $
    \Delta_{g_1}
         f_1
        \, ,\,
         \frac{\gamma_2}
         {b^2}
        \Delta_{g_2}
         f_2
         \right)
  %       $
         +
     %    $
         \left(\gamma''_1 f_1\, ,\, \gamma''_2 f_2\right)
         %$
         +
         %$
         \left(k_1\frac{a'}{a}+k_2\frac{b'}{b}\right)
         \left( \gamma'_1 f_1\, ,\, \gamma'_2 f_2\right)
  \nonumber         \\
 &=&
       %$
       \left( 
       \left[- \frac{k_1}
         {a^2}
         +
         \frac{\gamma''_1}{\gamma_1}
         +
         k_1\frac{a'\gamma_1'}{a\gamma_1}
         +
      %\textcolor{red}
      {k_2\frac{b'\gamma_1'}{b\gamma_1}}
        \right]
        \gamma_1f_1
        \, ,\,
        \left[- \frac{k_2}
         {b^2}
         +
         \frac{\gamma''_2}{\gamma_2}
         +
        %\textcolor{red}
        {k_1\frac{a'\gamma'_2}{a\gamma_2}}
         +
         k_2\frac{b'\gamma_2'}{b\gamma_2}
        \right]
        \gamma_2f_2
         \right)
 \nonumber  
\end{eqnarray} 
where  derivative $'$ is taken with respect to an arc parameter $t_{arc}$ of $\gamma$.

Since the expressions of complex coefficients of $\gamma_1f_1$ and $\gamma_2f_2$ are completely symmetric about indices (and $a,b$), 
                                      we shall only focus on what happens to the coefficient of  $\gamma_1f_1$, namely,
                              \begin{equation}\label{coef1}
                                     - \frac{k_1}
         {a^2}
         +
         \frac{\gamma''_1}{\gamma_1}
         +
         k_1\frac{a'\gamma_1'}{a\gamma_1}
         +
      %\textcolor{red}
      {k_2\frac{b'\gamma_1'}{b\gamma_1}}.
      \end{equation}
                                      The relevant terms express as follows
\begin{eqnarray}     
       \gamma_1'
        &=&
        [a'+ias_1']e^{is_1}
\nonumber\\ 
        %&&
        \gamma_1''
          &=&
          [a''-a(s_1')^2+i(2a's_1'+as_1'')]e^{is_1}
\nonumber        \\
       % &&
        \gamma_2'
          &=&
          [b'+ibs_2']e^{is_2}
\nonumber        \\
  %      
%&&  
               \gamma_2''
                 &=&
               [b''-b(s_2')^2+i(2b's_2'+bs_2'')]e^{is_2}.
\nonumber
\end{eqnarray}

    So, expression \eqref{coef1} splits  in  real and imaginary parts
\begin{eqnarray}    
  && 
  \label{coef2}
          %\left[
      \  \  - \frac{k_1}
       {a^2}
         +
         \frac{\gamma''_1}{\gamma_1}
         +
         k_1\frac{a'\gamma_1'}{a\gamma_1}
         +
      %\textcolor{red}
      {k_2\frac{b'\gamma_1'}{b\gamma_1}}
      %  \right]
        \\
&=&
 \left[
 - \frac{k_1}
         {a^2}
+
\frac{a''-a(s_1')^2}{a}
+k_1\frac{(a')^2}{a^2}
 +k_2\frac{a'b'}{ab}
  \right]
%\\
%&& 
 +
  i \left[
 \frac{2a's_1'+as_1''}{a}
 +k_1
 \frac{a's_1'}{a}
 +
 k_2
 \frac{b's_1'}{b}
   \right].
   \nonumber
\end{eqnarray}    
               Let us recall 
               $
               \frac{ds}{d t_{arc}}=\frac{1}{\sqrt{1+\Theta}}
               $
               and 
               some basic relations 
     \begin{equation}\label{tarc}
                               \frac{d}{d t_{arc}} s_1=\frac{\dot s_1}{\sqrt{1+\Theta}}
                               =\frac{\tan s}{\sqrt{C_2  
                 \left(\cos s\right)^{2k_1+2}
                  \left( \sin s\right)^{2k_2+2}}}
                  =\frac{1}{\sqrt{C_2}\,\, a^{k_1+2}b^{k_2}}\, ,
       \end{equation}
         \begin{equation}\label{tarc1}
                               \frac{d}{d t_{arc}} s_2
                               =\frac{\dot s_2}{\sqrt{1+\Theta}}
                               =\frac{C_1\cot s}{{\sqrt{C_2  
                 \left(\cos s\right)^{2k_1+2}
                  \left( \sin s\right)^{2k_2+2}}}}
                  =\frac{C_1}{\sqrt{C_2}\, \, a^{k_1} b^{k_2+2}} ,
       \end{equation}
        \begin{equation}\label{tarc2}
                               \frac{d}{d t_{arc}} a
                               =-\frac{b}{\sqrt{1+\Theta}}
                               =
                               -\frac
                               {
                                \sqrt{C_2  
                 a^{2k_1+2}
               b^{2k_2+2}
                  \,
              %      \big|
              %            \cos s \sin s
              %             \big|
                   -
                                    %  \sin^2 s
                 %  \sqrt
                   {
                 %  \big|
                      1-(C_1^2-1) a^2
                  %    \Big|
                   }
            }
                               }
         {\sqrt{C_2}\,
                 a^{k_1+1}
                  b^{k_2}} ,
               %   =\frac{1}{\sqrt{C_2}\,\, a^{k_1+2}b^{k_2}}
       \end{equation}
          \begin{equation}\label{tarc3}
                         \,      \frac{d}{d t_{arc}} b=\frac{a}{\sqrt{1+\Theta}}
                                                        =
                               \frac
                               {
                                \sqrt{C_2  
                 a^{2k_1+2}
               b^{2k_2+2}
                  \,
              %      \big|
              %            \cos s \sin s
              %             \big|
                   -
                                    %  \sin^2 s
                 %  \sqrt
                   {
                 %  \big|
                      1-(C_1^2-1) a^2
                  %    \Big|
                   }
            }
                               }
         {\sqrt{C_2}\,
                 a^{k_1}
                  b^{k_2+1}}
                         .
       \end{equation}

Since
$s_1'=\frac{1}{\sqrt{C_2}\,\, a^{k_1+2}b^{k_2}}$,
we have by \eqref{tarc} to \eqref{tarc3} that
 \begin{eqnarray*} 
\ \ \ \ \ \ \ s_1''    
=
               -\frac{\left[(k_1+2)\frac{a'}{a}+k_2\frac{b'}{b}\right]a^{k_1+2}b^{k_2}
                     }{\sqrt{C_2}(a^{k_1+2}b^{k_2})^2}
=
                     -\frac{-\frac{(k_1+2)b}{a\sqrt{1+\Theta}}+\frac{k_2a}{b\sqrt{1+\Th}}
                     }{\sqrt{C_2}    a^{k_1+2}b^{k_2}},
 \end{eqnarray*} 
                    $$
                   \, \frac{a's_1'}{a}=
                    \frac{-\frac{b}{a\sqrt{1+\Theta}}
                     }{\sqrt{C_2}    a^{k_1+2}b^{k_2}},\
                    $$
                    $$
                     \frac{b's_1'}{b}
                     =
                     \frac{\frac{a}{b\sqrt{1+\Theta}}
                     }{\sqrt{C_2}    a^{k_1+2}b^{k_2}}.
                    $$
             Hence, the imaginary part in \eqref{coef2} vanishes.

                        To compute  the real part of  \eqref{coef2} which we hope to be $-k_1-k_2-1$, we will bother more subtle computations.
           Some easy terms are 
           \begin{eqnarray*}    
                     k_1\frac{(a')^2}{a^2}
                     &=
                     &
                  \ \ \   \frac{k_1b^2}{a^2(1+\Th)},
                     \\
k_2\frac{a'b'}{ab}
                     &=
                     &
                                    \ \ \        -\frac{k_2}{1+\Th},
                    \\
                     (s_1')^2
                     &=&
                     \frac{1}{{C_2}\,\, a^{2k_1+4}b^{2k_2}}.
        \end{eqnarray*}    
            Denote  $IV \triangleq                   k_1\frac{(a')^2}{a^2}+k_2\frac{a'b'}{ab}
             =\frac{k_1b^2-k_2a^2}{a^2(1+\Th)}
$
 by \eqref{tarc2} and \eqref{tarc3}.

              Since $a'=-\frac{b}{\sqrt{1+\Theta}}$,         we have    
              \begin{eqnarray*} 
              \frac{a''}{a}
              &=&
              -\frac{b'}{a\sqrt{1+\Th}}
              +
              \frac{b\Th'}{2a(1+\Th)^{\frac{3}{2}}}
          \\
              &=&
              -\frac{b'}{a\sqrt{1+\Th}}
              +
              \frac{b\dot \Th}{2a(1+\Th)^{2}}
              \\
  &=&
              -\frac{1}{{1+\Th}}
              +
              \frac{b\dot \Th}{2a(1+\Th)^{2}}
              \\
              &\triangleq &
            \ \ \ \  \  I \ \ \ \ \ \ + \ \ \ \ \ \ \ \ X\ \ \ \  .
            \end{eqnarray*} 
                                    Note that
             \begin{eqnarray*}
                             X=     \frac{b\dot \Th}{2a(1+\Th)^{2}}
                                    & =&
                                            \frac{b\Th}{a(1+\Th)}
                                            \frac{\dot \Th}{2\Th(1+\Th)}
                                            \\
                    \text{by }\eqref{dotG2}         \ \ \ \         \ \ \ \ \ \ \ \ \        &=&
                                            \frac{b(1+(C_1^2-1)a^2)}{aC_2  a^{2k_1+2} b^{2k_2+2}}
                                            \left(
                                           k_1 \frac{b}{a}
                                           -k_2\frac{a}{b}
                                           +\frac{b^4-C_1^2a^4}{ab(b^2+C_1^2a^2)}
                                            \right)
                                            \\
                                            &=&
                                              \frac{b(1+(C_1^2-1)a^2)}{aC_2  a^{2k_1+2} b^{2k_2+2}}
                                            \left(
                                           k_1 \frac{b}{a}
                                           -k_2\frac{a}{b}
                                           \right)
                                           +
                                              \frac{b^4-C_1^2a^4}{a^2C_2  a^{2k_1+2} b^{2k_2+2}}
                                      \\
                                         &\triangleq &
            \ \ \ \ \ \ \   II\ \ \ \ \ \  \ \ \ + \ \ \ \ \ \ \ \ III\ \ \ \ \ \ \ .
         \end{eqnarray*}
         Here we need to clarify that \eqref{dotG2} makes sense  for spiral minimal product of varying magnitudes, i.e., $a'b'$ not alway zero.
         The limiting status $-$ spiral minimal product of steady magnitudes will be discussed  later.

                                                 As
                                                   $$
                 \frac{1}{1+\Th}
                 =\frac{C_2 a^{2k_1+2}b^{2k_2+2}-1-(C_1^2-1)a^2}{C_2 a^{2k_1+2}b^{2k_2+2}}
                 =1-\frac{1+(C_1^2-1)a^2}{C_2 a^{2k_1+2}b^{2k_2+2}},
                 $$
                 we get
            \begin{eqnarray*}
                I+III 
                &=&
                 -\frac{1}{1+\Th}
+  \frac{b^4-C_1^2a^4}{a^2C_2  a^{2k_1+2} b^{2k_2+2}}\\
   &=&
                  -1+\frac{a^2(1+(C_1^2-1)a^2)+b^4-C_1^2a^4}{a^2C_2  a^{2k_1+2} b^{2k_2+2}}
                  \\
                  &=&
               -1+\frac{a^2-a^4+b^4}{a^2C_2  a^{2k_1+2} b^{2k_2+2}}
               \\
               &=&
               -1+\frac{b^2}{a^2C_2  a^{2k_1+2} b^{2k_2+2}}
               \\
               &=&
               -1+\frac{1}{C_2  a^{2k_1+4} b^{2k_2}}.
            \end{eqnarray*}

Now let us compute
   \begin{eqnarray*}
   IV+II
              &=&
        \frac{k_1 b^2- k_2 a^2}{a^2(1+\Th)}
        +
                                                \frac{b(1+(C_1^2-1)a^2)}{aC_2  a^{2k_1+2} b^{2k_2+2}}
                                            \left(
                                           k_1 \frac{b}{a}
                                           -k_2\frac{a}{b}
                                           \right)
                                           \\
                                        &   =&
                                           \frac{k_1 b^2- k_2 a^2}{a^2}
                                           \\
                                           &=&
                                           \frac{k_1b^2}{a^2}-k_2.
  \end{eqnarray*}

As a result,  the real part of \eqref{coef2}
       \begin{eqnarray*}
                                           &&   %\left[
                                               - \frac{k_1}
                                                  {a^2}
                                                  +
                                                  \frac{a''-a(s_1')^2}{a}
                                                  +k_1\frac{(a')^2}{a^2}
                                                  +k_2\frac{a'b'}{ab}
                                                  % \right]
                                                  \\
                                                  &=&
                                              - \frac{k_1}
                                                  {a^2}
                                                            + I+X -(s_1')^2 +IV
                                                            \\
                                                            &=&
                                                          - \frac{k_1}
                                                  {a^2}
                                                            + I+II+III -(s_1')^2 +IV
                                                            \\
                                                                         &=&
                                                          - \frac{k_1}
                                                  {a^2}
                                                            + I+III -(s_1')^2 ++II+IV
                                                            \\
                                                            &=&
                                                          - \frac{k_1}
                                                  {a^2}
                                                             -1+\frac{1}{C_2  a^{2k_1+4} b^{2k_2}} -(s_1')^2 + \frac{k_1b^2}{a^2}-k_2       
                                                             \\
                                                               &=&
                                                          - \frac{k_1}
                                                  {a^2}
                                                             -1 + \frac{k_1b^2}{a^2}-k_2    
                                                             \\
                                                             &=&
                                                             -k_1-k_2-1.     
                                                   \end{eqnarray*}
Thus we finish the proof for spiral minimal product of varying magnitudes.

                 Let us  consider the computations for spiral minimal product of steady magnitudes, that means $a,b$ are nonzero constants $\cos s_{C_1}$ and $\sin s_{C_1}$ respectively.
                 Clearly, $s_1''=0$ by \eqref{tarc}
                 and the imaginary part of \eqref{coef2} vanishes.
                 The real part
                 gets simpler
                 as 
                 $ - \frac{k_1}
         {a^2}
-(s_1')^2$. 
By \eqref{tarc2} we have $C_2a^{2k_1+2}b^{2k_2+2}=1+(C_1^2-1)a^2$ for this situation.
                          Together with \eqref{tarc}, we get
                       $$(s_1')^2=   \frac{1}{{C_2}\,\, a^{2k_1+4}b^{2k_2}}=\frac{\frac{b^2}{a^2}}{1+(C_1^2-1)a^2}.$$
                       In \cite{LZ}
                       it has been figured out that
                       \begin{equation}   \label{sCd}
                 C_1^2-1
                 =\dfrac
                     {-(2k_1+2)\tan s_{C_1}+(2k_2+2)\cot s_{C_1}}
                     {\, 2k_1\tan s_{C_1}-(2k_2+2)\cot s_{C_1}  \, }
                  %   \cdot
                     \dfrac{1}{\cos^2 s_{C_1}}
                     \, .
\end{equation}  
By  virtue of \eqref{sCd}, we arrive at
$$(s_1')^2=   \frac{\tan^2 s_{C_1}              \left(
                                             2k_1\tan s_{C_1}-(2k_2+2)\cot s_{C_1}
                                             \right)      }{-2\tan s_{C_1}}
                                             =
                                             -k_1\tan^2 s_{C_1} 
                                             +
                                             k_2+1.$$
                                             Therefore,
                                              $ - \frac{k_1}
                                                          {a^2}-(s_1')^2
                                                          =
                                                     -k_1-k_2-1
$
         and we finish the proof.
                                          \end{proof}

                A corollary of the theorem is the following.

\begin{cor}
Let $L_1, L_2$ be given as in Theorem \ref{main} of dimension $k_1, k_2>0$.
Then all immersed submanifold of format $G_\gamma(t, x, y)=\big(\gamma_1f_1(x), \gamma_2f_2(y)\big)$
satisfying 
     $$
              {\Delta_{g_\gamma}}
        G_\gamma
=-(k_1+k_2+1)G_\gamma\, 
              $$
by  immersed curves $\gamma=(\gamma_1,\gamma_2)\subset\mathbb S^3$ with $\gamma_1\gamma_2\neq 0$
are exactly the spiral minimal products of $f_1$ and $f_2$ with varying or steady magnitudes.
\end{cor}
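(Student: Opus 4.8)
The inclusion that every spiral minimal product satisfies the eigenvalue equation is precisely Theorem~\ref{main}, so the plan is to prove the reverse inclusion: any immersed $\gamma=(\gamma_1,\gamma_2)\subset\mathbb S^3$ with $\gamma_1\gamma_2\neq0$ for which $G_\gamma=(\gamma_1f_1,\gamma_2f_2)$ obeys $\Delta_{g_\gamma}G_\gamma=-(k_1+k_2+1)G_\gamma$ must be a spiral minimal product curve. The first point I would make is that the opening computation in the proof of Theorem~\ref{main} never invokes the spiral structure: once $\gamma$ is parametrized by arc length and written as $\gamma_1=ae^{is_1}$, $\gamma_2=be^{is_2}$ with $a^2+b^2=1$ and $a,b>0$, the $C$-totally real hypothesis forces the induced metric into the block-diagonal shape \eqref{indmetric}, and the eigenvalue identities $\Delta_{g_i}f_i=-k_if_i$ give, for \emph{every} such $\gamma$, the formula $\Delta_{g_\gamma}G_\gamma=(c_1\gamma_1f_1,\,c_2\gamma_2f_2)$ with $c_1$ the complex coefficient \eqref{coef1} and $c_2$ its index-symmetric partner, each a function of the curve parameter alone. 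Since $|f_i|\equiv1$ and $\gamma_i\neq0$ the vectors $\gamma_1f_1,\gamma_2f_2$ never vanish, so the eigenvalue equation is equivalent to the scalar conditions $c_1=c_2=-(k_1+k_2+1)$.

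Splitting \eqref{coef2} into imaginary and real parts converts these into four real relations, and the strategy is to run the equality chain of Theorem~\ref{main} backwards. The imaginary parts come first: $\mathrm{Im}\,c_1=0$ is the linear first-order relation $as_1''+(k_1+2)a's_1'+k_2\tfrac{a}{b}b's_1'=0$, which integrates at once to $s_1'a^{k_1+2}b^{k_2}=\mathrm{const}$, i.e.\ the form \eqref{tarc}; symmetrically $\mathrm{Im}\,c_2=0$ gives \eqref{tarc1}. The quotient of the two cancels the common reparametrization factor $ds/dt_{arc}$ and reproduces the conservation law \eqref{e1}, with $C_1$ emerging as the ratio of the two integration constants and $C_2$ fixed by the remaining one. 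Thus the imaginary parts alone fix the angular speeds in the two-parameter normal form.

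For the real part I would convert $\mathrm{Re}\,c_1=-(k_1+k_2+1)$ through the arc-length relation $ds/dt_{arc}=(1+\Theta)^{-1/2}$, noting that in the expansion \eqref{coef2} the only term carrying $\dot\Theta$ is $X=\tfrac{b\dot\Theta}{2a(1+\Theta)^2}$ and that it enters affinely; the condition is therefore equivalent to a single first-order ODE for $\Theta$, which a direct computation identifies with the governing equation \eqref{dotG}, hence with \eqref{dotG2} once the conservation law \eqref{e1} is inserted. The companion condition $\mathrm{Re}\,c_2=-(k_1+k_2+1)$ reduces, by a short but genuinely delicate calculation, to the \emph{same} ODE \eqref{dotG} — the apparent $a\leftrightarrow b$ asymmetry being absorbed by the opposite signs of the $\dot\Theta$-terms in $a''/a$ and $b''/b$ — so the two real parts are consistent rather than over-determining. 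Integrating \eqref{dotG2} gives the profile \eqref{Gexp} with integration constant $C_2$, so that \eqref{e1} and \eqref{dotG2} together rebuild the local branches \eqref{dotsss}; gluing the $\pm$ pieces $C^1$-smoothly and invoking the analyticity recorded in \S2 exhibits $\gamma$ as a spiral minimal product of varying magnitude.

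It remains to treat the steady case, where the preferred parameter $s$ is unavailable because $a,b$ are constant. Here $a'=b'=0$ collapses the imaginary parts to $s_1''=s_2''=0$, so $s_1',s_2'$ are constants, while the real parts become the purely algebraic $-\tfrac{k_1}{a^2}-(s_1')^2=-(k_1+k_2+1)$ and its mirror; combining these with $a^2+b^2=1$, the arc-length identity $a^2(s_1')^2+b^2(s_2')^2=1$, and the conservation law reproduces the characterization \eqref{sCd} of $s_{C_1}$, identifying $\gamma$ as a steady spiral minimal product, the limit noted after \eqref{dotsss}. The step I expect to be most delicate is not the algebra, which merely reverses computations already in hand, but the boundary bookkeeping: justifying the division by $s_1'$ (resp.\ $s_2'$) when integrating the imaginary parts, so that loci where an angular speed vanishes are correctly absorbed as the $C_1\in\{0,\infty\}$ ends of the family, and checking that the integration constant $C_2$ can be chosen to keep the denominator of \eqref{Gexp} positive on the maximal interval $J$, so that the reconstructed branches actually assemble into the global analytic curve $\gamma$.
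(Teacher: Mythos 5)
Your argument is correct in outline, but it takes a genuinely different route from the one the paper intends. The paper prints no proof of this corollary because it is meant to fall out of the Takahashi Theorem itself: since $|G_\gamma|^2=a^2+b^2=1$ and $\dim =k_1+k_2+1$, the forward direction of Theorem \ref{T} converts the eigenvalue equation $\Delta_{g_\gamma}G_\gamma=-(k_1+k_2+1)G_\gamma$ directly into the statement that $G_\gamma$ is a minimal immersion into the unit sphere, and the ODE analysis recalled in \S 2 (from \cite{LZ}) already identifies the minimal immersions of this product format as exactly the spiral minimal products of varying or steady magnitudes; Theorem \ref{main} supplies the reverse inclusion. You instead reverse-engineer the Laplacian computation, which buys self-containedness (you never invoke Takahashi for the hard inclusion, nor the exhaustiveness of the classification in \cite{LZ}) at the cost of re-deriving the governing ODEs. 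One simplification is available to you and dissolves the consistency issue you labour over: once the two imaginary parts are integrated to $s_1'a^{k_1+2}b^{k_2}=\mathrm{const}$ and $s_2'a^{k_1}b^{k_2+2}=\mathrm{const}$, the arc-length identity $a^2(s_1')^2+b^2(s_2')^2=\Theta/(1+\Theta)$ already forces $\Theta$ into the closed form \eqref{Gexp}, which is precisely the general solution of \eqref{dotG2}; the computation in the proof of Theorem \ref{main} then shows that both real-part conditions hold automatically, so they impose nothing new rather than merely being mutually consistent. With that observation the only genuinely delicate points are the ones you correctly flag: the degenerate loci where $s_1'$ or $s_2'$ vanishes, and the varying-versus-steady dichotomy (ruling out a curve that is steady on an interval and varying elsewhere), both of which are settled by ODE uniqueness together with the analyticity of the glued solution recorded in \S 2.
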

\begin{rem}
Under the dimension assumption, $\gamma_1\gamma_2\neq 0$ is necessary for obtaining an immersed $G_\gamma(t, x, y)$. 
\end{rem}

{\ }

%
%
%
                       %
                       %
                       %

%{\ }

\begin{bibdiv}
\begin{biblist}

\bib{CLU}{article}{
    author={Castro, Ildefonso}
    author={Li, Haizhong}
    author={Urbano, Francisco},
    title={Hamiltonian minimal Lagrangian submanifolds in complex space form},
    journal={Pacific Journal of Mathematics},
    volume={227},
    date={2006},
    pages={43--65},
}

\bib{LZ}{article}{
author={Li, Haizhong},
        author={Zhang, Yongsheng},
    title={Spiral Minimal Products},
    journal={arXiv:2306.03328},
}

\bib{T}{article}{
    author={Takahashi, Tsunero},
    title={Minimal immersions of Riemannian manifolds},
    journal={J. Math. Soc. Japan},
    volume={18},
    date={1966},
    pages={380--385},
}

\bib{TZ}{article}{
    author={Tang, Zizhou},
    author={Zhang, Yongsheng},
    title={Minimizing cones associated with isoparametric foliations},
    journal={J. Differential Geom.},
    volume={115},
    date={2020},
    pages={367--393},
}

\bib{X}{book}{
    author={Xin, Yuanlong},
    title={Minimal submanifolds and related topics},
    place={Nankai Tracts in Mathematics, World Scientific Publishing},
   date={2003 (and Second Edition in 2018)},

}

{\ }

\end{biblist}
\end{bibdiv}

\end{document}